\documentclass[11pt,reqno]{amsart}

\usepackage{amssymb}
\usepackage{amsmath}
\usepackage{amsfonts}
\usepackage[english]{babel}
\usepackage[T1]{fontenc}
\usepackage[latin1]{inputenc}
\usepackage{amsthm}
\usepackage[all]{xy}

\theoremstyle{plain}
\newtheorem{theorem}{Theorem}
\newtheorem{proposition}[theorem]{Proposition}
\newtheorem{corollary}[theorem]{Corollary}

\theoremstyle{definition}

\theoremstyle{remark}

\newtheorem{remark}[theorem]{Remark}

\newcommand{\Bloch}{{\mathcal B}}
\newcommand{\Blocha}{{{\mathcal B}_\alpha}}
\newcommand{\Blochb}{{{\mathcal B}_\beta}}
\newcommand{\Blochaz}{{{\mathcal B}_{\alpha,0}}}
\newcommand{\Blochbz}{{{\mathcal B}_{\beta,0}}}
\newcommand{\e}{{\mathrm e}}

\newcommand{\UnitDisk}{\mathbb{D}}
\newcommand{\ComplexPlane}{\mathbb{C}}

\newcommand{\wco}{{W_{\psi,\varphi}}}
\newcommand{\wcot}{{\widetilde{W}_{\psi,\varphi}}}

\title[Weighted composition operators]{Weighted composition operators between weak spaces of vector-valued analytic functions}
\author{Mostafa Hassanlou}
\address{M. Hassanlou, Department of Pure Mathematics, Faculty of Mathematical Sciences, University of Tabriz, Tabriz, Iran}
\email{m\_hasanloo@tabrizu.ac.ir}
\author{Jussi Laitila}
\address{J. Laitila, Department of Biosciences, P.O.\ Box 65, FI-00014 University of Helsinki, Helsinki, Finland}
\email{jussi.laitila@helsinki.fi}
\author{Hamid Vaezi}
\address{H. Vaezi, Department of Pure Mathematics, Faculty of Mathematical Sciences, University of Tabriz, Tabriz, Iran}
\email{hvaezi@tabrizu.ac.ir}

\numberwithin{equation}{section}

\begin{document}

\begin{abstract}
We consider weighted composition operators $W_{\psi,\varphi}\colon f\mapsto \psi(f\circ\varphi)$ on spaces of analytic functions on the unit disc, which take values in some complex Banach space.
We provide necessary and sufficient conditions for the boundedness and (weak) compactness of $W_{\psi,\varphi}$ on general function spaces, and in particular on weak vector-valued spaces. As an application, we characterize the weak
compactness of $W_{\psi,\varphi}$ between two different vector-valued Bloch-type spaces. This result appears to be new also in the scalar-valued case.
\end{abstract}

\maketitle

\section{Introduction}

Let $\UnitDisk$ be the open unit disc in the complex plane $\ComplexPlane$ and, for any complex Banach space $X$, let $H(\UnitDisk,X)$ denote the space of all analytic functions
$f\colon \UnitDisk\to X$. For two complex Banach spaces $X$ and $Y$, let $L(X,Y)$ be the space of bounded linear operators from $X$ to $Y$. Then a function $\psi\in H(\UnitDisk, L(X,Y))$ and an analytic self-map $\varphi$ of $\UnitDisk$ induce a linear weighted composition operator $\wco$, which is defined from $H(\UnitDisk, X)$ to $H(\UnitDisk,Y)$, by
\begin{align}\label{eq:wcodef}
(\wco f)(z)=\psi(z)f(\varphi(z)), \quad z\in\UnitDisk.
\end{align}
We  have $\wco=M_\psi C_\varphi$, where $M_\psi$ is the operator-valued multiplier $f\mapsto \psi f$ and $C_\varphi$ is the composition operator $f\mapsto f\circ\varphi$. Weighted compositions appear naturally:
for a large class of Banach spaces $X$,
all linear onto isometries between $X$-valued $H^\infty$ spaces are of the form \eqref{eq:wcodef} for suitable $\psi$ and $\varphi$;
see \cite{CJ90} and \cite{Li90}.
Properties of multipliers $M_\psi$ and composition operators $C_\varphi$ have been widely studied on various spaces of vector-valued analytic functions, including vector-valued Hardy, Bergman, BMOA and Bloch spaces; see, for example, 
\cite{BDL01, L05, LT06, LST98, M03} and the recent survey \cite{LT14}. This study has further been extended to weighted composition operators; see \cite{BGJW12, HVW15, LT09, M08, M11, W11}.

The study of composition operators on vector-valued function spaces involves some basic results which hold for large classes of function spaces. For example, if $X$ is infinite dimensional, then composition operators are usually never compact on an $X$-valued analytic function space. On the other hand, for the weak compactness of $C_\varphi$ it is necessary that $X$ is reflexive and that $C_\varphi$ is weakly compact on the corresponding scalar-valued function space. For the latter statement a partial converse holds for the so-called weak vector-valued function spaces: for any Banach space $E$ of analytic functions $f\in H(\UnitDisk, \ComplexPlane)$ such that $E$ contains the constant functions and the closed unit ball $B_E$ is compact in the compact open topology 
of $\UnitDisk$, the weak space $wE(X)$ (modelled on $E$) is the space of functions $f\in H(\UnitDisk,X)$ such that
\begin{align}\label{eq:defweakspace}
\Vert f \Vert_{wE(X)}=\sup_{\Vert x^* \Vert_{X^*} \le 1}\Vert x^* \circ f \Vert_{E} < \infty.
\end{align}
See \cite{BDL01, LT14} for the above results.

The first aim of this paper is to extend such general necessary and sufficient conditions to weighted composition operators between two different function spaces, where the weight function $\psi$ is scalar valued, i.e., $\psi\in H(\UnitDisk, \ComplexPlane)$. 
Consequently, we obtain characterizations of the weak compactness of weighted composition operators between many weak vector-valued spaces, such as weak Hardy and Bergman spaces. 

In the second part, we apply the above results in characterizing the weak compactness of weighted composition operators between two different Bloch-type function spaces. Here the main part consists of a scalar-valued result, which also appears to be new.

Recall that an operator $T\in L(X,Y)$ is (weakly) compact if for any bounded sequence $(x_n)$ in $X$ there is a (weakly) convergent subsequence $(T x_{n_k})$ in $Y$. Any compact operator is clearly weakly compact.

\section{Weighted composition operators on general vector-valued function spaces}

In this section we consider weighted composition operators between general spaces of vector-valued analytic functions.
Most arguments are fairly straightforward extensions of known results for composition operators from earlier work (such as \cite{LST98}, \cite{BDL01} and \cite{LT14}) but we present the details for completeness. Below we assume that the weight function $\psi$ is complex valued ($\psi\in H(\UnitDisk, \ComplexPlane)$) and nonzero ($\psi(z)\ne 0$ for some $z\in\UnitDisk$).

Suppose that $A$ is a Banach space of analytic functions $\UnitDisk \to \ComplexPlane$
and let $A(X)$ be an associated vector-valued  Banach space of analytic functions $\UnitDisk \to X$,
where $X$ is a complex Banach space.
Assume that the following properties hold for the pair $(A,A(X))$ for all Banach spaces $X$ (see \cite[Section 2]{LT14}):

\begin{itemize}
\item[(a1)] The constant maps $f(z) \equiv c$ belong to $A$ for all $c \in \mathbb C$.

\item[(a2)]  $f \mapsto f \otimes x$ defines a bounded linear operator
$J_x\colon A \to A(X)$
for  any $x \in X$, where $(f \otimes x)(z) = f(z)x$ for $z \in \mathbb{D}$.

\item[(a3)]  $f \mapsto x^* \circ f$ defines a bounded linear operator $Q_{x^*}\colon A(X) \to A$ for any $x^* \in X^*$.

\item[(a4)] The point evaluations $\delta_z$,  where $\delta_z(f) = f(z)$
for $f \in A(X)$, are bounded $A(X) \to X$ for all  $z \in \mathbb D$.
\end{itemize}

It follows from these assumptions that the vector-valued constant maps $z\mapsto f_x(z)=1\otimes x\equiv x$ belong to $A(X)$ for all $x\in X$; see \cite{LT14}.

Suppose that $A(X)$ and $B(X)$ are two spaces of analytic functions $\UnitDisk\to X$ such that the pairs $(A,A(X))$ and $(B,B(X))$ satisfy
(a1) -- (a4). The following result extends Corollary 2 of \cite{LT14} and Proposition 1 of \cite{BDL01} to weighted composition operators. The result involves general operator ideals in the sense of Pietsch \cite{P80}, although the remainder of the paper mainly concerns the closed ideals of compact and weakly compact operators (but see Remark \ref{rmk1} below).
We will for clarity use the notation
$\wcot$ for the weighted composition operator between vector-valued spaces $A(X) \to B(X)$ and $\wco$ for the operator between corresponding complex-valued spaces $A\to B$.
We denote by $I_X$ the identity operator of $X$.

\begin{proposition}\label{prop1}
Let $X$ be a complex Banach space.
Let $\psi\in H(\UnitDisk, \ComplexPlane)$ be nonzero and let $\varphi$ be an analytic self-map of $\UnitDisk$.
\begin{enumerate}
\item
If $\wcot$ is bounded $A(X) \to B(X)$, then $\wco$ is bounded
$A \to B$.
\item
If
$\wcot\colon A(X) \to B(X)$ belongs to an operator ideal, 
then $I_X$ and
$\wco\colon A \to B$ belong to the same operator ideal.
\end{enumerate}
\end{proposition}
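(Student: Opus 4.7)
My plan is to realize both $\wco$ and $I_X$ as explicit factorizations through $\wcot$, built only from the ``coordinate'' maps $J_x$, $Q_{x^*}$, and $\delta_z$ supplied by (a1)--(a4). Once such factorizations are available, part (1) follows by composing bounded operators, while part (2) follows from the two-sided ideal property, i.e.\ that operator ideals are stable under left and right composition with arbitrary bounded operators.

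For (1), and simultaneously for the $\wco$-half of (2), I would pick any $x \in X$ and $x^* \in X^*$ with $x^*(x) = 1$ (possible by Hahn--Banach) and verify by unwinding definitions the identity
\begin{align*}
\wco \;=\; Q_{x^*} \circ \wcot \circ J_x
\end{align*}
as operators $A \to B$. The outer two factors are bounded by (a2) and (a3); the middle factor is bounded (resp.\ lies in the given ideal) by hypothesis. This yields the required boundedness (resp.\ ideal membership) of $\wco$.

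For the $I_X$-half of (2), I would use $\psi \not\equiv 0$ to pick $z_0 \in \UnitDisk$ with $\psi(z_0) \neq 0$, and let $S\colon X \to A(X)$ be the constant-function assignment $x \mapsto f_x$ with $f_x(z) \equiv x$. A direct computation gives
\begin{align*}
\delta_{z_0} \circ \wcot \circ S \;=\; \psi(z_0)\, I_X,
\end{align*}
where $\delta_{z_0}\colon B(X) \to X$ is bounded by (a4) applied on the target side. Dividing by $\psi(z_0)$ and invoking the ideal property places $I_X$ in the ideal.

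The only step beyond routine algebra is the boundedness of $S$ as a map $X \to A(X)$: property (a2) gives boundedness of $J_x$ for each fixed $x$ but does not directly yield $\|J_x\| \lesssim \|x\|$. I would close this gap with a closed-graph argument based on (a4). For each fixed $f \in A$, the map $x \mapsto J_x(f)$ has closed graph (since $\delta_z$ is continuous in both variables and $\delta_z(J_x(f)) = f(z)x$ is separately continuous in $x$), hence is bounded; the uniform boundedness principle then yields $\sup_{\|x\| \le 1} \|J_x\| < \infty$, which specialises to continuity of $S = J_{(\cdot)}(1)$. This is the main technical obstacle; the rest is bookkeeping with the ideal axioms.
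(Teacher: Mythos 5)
Your proof is correct and follows essentially the same route as the paper: the identical factorizations $\wco = Q_{x^*}\circ\wcot\circ J_x$ and $\psi(z_0)\,I_X=\delta_{z_0}\circ\wcot\circ S$ through the coordinate maps supplied by (a1)--(a4), followed by the two-sided ideal property. The only difference is that you justify the boundedness of $S\colon x\mapsto f_x$ by a closed-graph plus uniform-boundedness argument (which is sound), whereas the paper simply asserts this, deferring to \cite{LT14}.
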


\begin{proof}
Let $x\in X$, $x^*\in X^*$ be norm-1 vectors such that $\langle x^*, x \rangle = 1$, let $z_0\in \UnitDisk$ be a point such that $\psi(z_0)\ne 0$, and  let $j_1$, $j_2$, $j_3$, $j_4$ be the linear operators
\begin{align*}
&j_1\colon A\to A(X), \qquad f\mapsto f\otimes x;\\
&j_2\colon B(X)\to B, \qquad f\mapsto x^*\circ f;\\
&j_3 \colon X\to A(X), \qquad x\mapsto f_x;\\
&j_4\colon B(X)\to X, \qquad f\mapsto f(z_0)/\psi(z_0).\\
\end{align*}
By (a1) -- (a4), all these operators are bounded, and
\begin{align*}
\wco&=j_2 \circ\wcot \circ j_1;\\
I_X&=j_4\circ \wcot \circ j_3.
\end{align*}
.
\end{proof}

The following result settles the question of compactness of $\wcot$ between vector-valued spaces satisfying (a1) -- (a4).

\begin{proposition}\label{prop2}
Let $\psi\in H(\UnitDisk, \ComplexPlane)$ be nonzero and let $\varphi$ be an analytic self-map of $\UnitDisk$.
Then $\wcot$ is compact $A(X) \to B(X)$ if and only if $X$ is finite dimensional and $\wco$ is compact
$A \to B$.
\end{proposition}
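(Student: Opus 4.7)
The plan is to handle the two implications separately, with the forward direction being a direct corollary of Proposition \ref{prop1} and the reverse direction following from a finite-rank decomposition of $\wcot$.

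For the forward direction, suppose $\wcot\colon A(X)\to B(X)$ is compact. The class of compact operators is a closed operator ideal in the sense of Pietsch, so Proposition \ref{prop1}(2) immediately gives that $I_X$ is compact (hence $X$ is finite dimensional, by the Riesz lemma) and that $\wco\colon A\to B$ is compact. This step is essentially a citation of the preceding proposition.

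For the reverse direction, suppose $X$ is finite dimensional and $\wco\colon A\to B$ is compact. The idea is to fix a basis $\{x_1,\ldots,x_n\}$ of $X$ with biorthogonal functionals $\{x_1^*,\ldots,x_n^*\}\subset X^*$, expand any $f\in A(X)$ pointwise as $f(w)=\sum_{i=1}^n (x_i^*\circ f)(w)\,x_i$, and then verify the operator identity
\begin{equation*}
\wcot = \sum_{i=1}^n J_{x_i}\circ \wco \circ Q_{x_i^*},
\end{equation*}
where $J_{x_i}\colon A\to A(X)$ and $Q_{x_i^*}\colon B(X)\to B$ are the bounded operators provided by (a2) and (a3). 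The identity comes from the computation $(\wcot f)(z)=\psi(z)\sum_{i} (x_i^*\circ f)(\varphi(z))\,x_i=\sum_i (\wco(x_i^*\circ f))(z)\,x_i$. Since $\wco$ is compact and compact operators form a two-sided ideal that is closed under finite sums, the right-hand side is a compact operator $A(X)\to B(X)$, completing the proof.

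The main (minor) obstacle is ensuring that the factorization uses the functionals and embeddings exactly in the form guaranteed by axioms (a1)--(a4); once this is pinned down the argument is routine. No additional analytic input beyond finite dimensionality of $X$ is needed, so the proof is essentially algebraic.
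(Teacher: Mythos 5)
Your proof is correct and takes essentially the same route as the paper: the forward direction is exactly the paper's appeal to Proposition~\ref{prop1}(2) (compact operators form a closed ideal, and a compact identity forces $\dim X<\infty$), and your finite-rank factorization for the converse is the standard argument that the paper omits by deferring to \cite[Prop.~7]{LT14}. One small slip to fix: in the identity $\wcot=\sum_{i=1}^n J_{x_i}\circ \wco\circ Q_{x_i^*}$ the operators must be $Q_{x_i^*}\colon A(X)\to A$ and $J_{x_i}\colon B\to B(X)$ (you stated the spaces the other way around, which makes the composition undefined since $\wco$ maps $A$ to $B$); your displayed pointwise computation shows this is only a mislabeling, not a gap.
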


\begin{proof}
Proposition \ref{prop1} (2) implies that if
$\wcot$ is compact, then $X$ is finite dimensional and $\wco$ is compact.
The proof of the converse direction is a straightforward modification of the proof of \cite[Prop.~7]{LT14} and is thus omitted.
\end{proof}

Propositions \ref{prop1} and \ref{prop2} apply to a large number of function spaces, such as $X$-valued Hardy, Bergman, BMOA, and Bloch spaces as well as to their weak versions (see, for example, \cite{LT14} for the definitions of these function spaces). 
Before explicitly discussing any corollaries, let us consider the general class of weak vector-valued function spaces \eqref{eq:defweakspace}, introduced by Bonet, Doma\'nski and Lindstr\"om \cite{BDL01} in the context of composition operators.
Suppose that  $E$ is a Banach space of analytic functions $f\in H(\UnitDisk, \ComplexPlane)$ satisfying
the following conditions:

\begin{itemize}
\item[(b1)] $E$ contains the constant functions,

\item[(b2)] the closed unit ball $B_E$ is com\-pact in the compact open topology $\tau_{co}$
of $\UnitDisk$.
\end{itemize}
Then the weak space defined in \eqref{eq:defweakspace} is a Banach space; see \cite{BDL01}.
Moreover, by \cite[Lemma 13]{LT14}, properties (b1) and (b2) imply the assumptions (a1) -- (a4), so that  Propositions \ref{prop1} and \ref{prop2} apply to weak spaces.

The next result provides converses to Proposition \ref{prop2} (1) and (2) for bounded and weakly compact weighted composition operators between two different weak spaces. It also extends \cite[Proposition 11]{BDL01} to weighted composition operators.
Recall here that the identity operator $I_X$ of $X$ is weakly compact if and only if $X$ is reflexive.

\begin{proposition}\label{prop3}
Let $\psi\in H(\UnitDisk, \ComplexPlane)$ and let $\varphi$ be an analytic self-map of $\UnitDisk$.
Suppose that $E_1$ and $E_2$ are spaces of analytic functions on $\UnitDisk$ that satisfy assumptions (b1) and (b2).
\begin{enumerate}
\item
If $\wco\colon E_1 \to E_2$ is bounded, then $\wcot\colon wE_1(X) \to wE_2(X)$ is bounded.
\item
If $X$ is a reflexive Banach space and $\wco\colon E_1\to E_2$ is com\-pact,
then  $\wcot$ is weakly com\-pact $wE_1(X) \to wE_2(X)$.
\end{enumerate}
\end{proposition}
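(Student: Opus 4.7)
For part (1), my plan is a direct computation based on the identity $x^* \circ (\wcot f) = \wco(x^* \circ f)$, which follows from the definitions and the linearity of each $x^* \in X^*$. Given $f \in wE_1(X)$ and $x^* \in X^*$ with $\|x^*\|_{X^*} \le 1$, boundedness of $\wco\colon E_1 \to E_2$ yields $\|x^* \circ \wcot f\|_{E_2} \le \|\wco\|\,\|x^* \circ f\|_{E_1} \le \|\wco\|\,\|f\|_{wE_1(X)}$. Taking the supremum over $x^*$ gives $\|\wcot f\|_{wE_2(X)} \le \|\wco\|\,\|f\|_{wE_1(X)}$, so $\wcot$ is bounded with the same norm estimate.

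For part (2), my plan is to verify relative weak compactness of $\wcot(B_{wE_1(X)})$ via the Eberlein--\v{S}mulian theorem. Fix a sequence $(f_n)$ in $wE_1(X)$ with $\|f_n\|_{wE_1(X)} \le 1$; the goal is to produce a weakly convergent subsequence of $(\wcot f_n)$ in $wE_2(X)$. First, for each $z \in \UnitDisk$ the point evaluation $\delta_z$ is bounded on $wE_1(X)$ (a consequence of (a4), which follows from (b1) and (b2)), so $(f_n(z))$ is bounded in the reflexive space $X$. A diagonal extraction over a countable dense subset of $\UnitDisk$, combined with the $\tau_{co}$-compactness of $B_{E_1}$ to identify the unique $\tau_{co}$-cluster points of each scalar sequence $(x^* \circ f_n)$, yields a subsequence (still denoted $(f_n)$) and a function $g\colon \UnitDisk \to X$ such that $f_n(z)$ converges weakly in $X$ to $g(z)$ for every $z \in \UnitDisk$. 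The function $g$ is weakly analytic, hence analytic by Dunford's theorem, and lies in $wE_1(X)$ with $\|g\|_{wE_1(X)} \le 1$.

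Next, I would show that $x^* \circ \wcot f_n \to x^* \circ \wcot g$ in $E_2$-norm for each fixed $x^* \in X^*$. Indeed, $(x^* \circ f_n)$ is bounded in $E_1$ and $\tau_{co}$-converges to $x^* \circ g$. Since $\wco\colon E_1 \to E_2$ is compact, every subsequence of $\bigl(\wco(x^* \circ f_n)\bigr)$ has a further subsequence converging in $E_2$-norm, and the norm limit must coincide pointwise on $\UnitDisk$ with $\wco(x^* \circ g)$; hence the whole sequence $\wco(x^* \circ f_n) = x^* \circ \wcot f_n$ converges in $E_2$-norm to $\wco(x^* \circ g) = x^* \circ \wcot g$.

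The last -- and most delicate -- step is to upgrade this to weak convergence of $\wcot f_n$ to $\wcot g$ in $wE_2(X)$. The main obstacle is that the convergence in $E_2$ need not be uniform in $\|x^*\|_{X^*} \le 1$ (so $\|\wcot f_n - \wcot g\|_{wE_2(X)} \to 0$ cannot in general be concluded), and a concrete description of $wE_2(X)^*$ is unavailable. My plan is to exploit reflexivity of $X$ to identify $wE_2(X)$ isometrically with $L(X^*, E_2)$ via $f \mapsto (x^* \mapsto x^* \circ f)$. Under this identification the operators corresponding to $(\wcot f_n)$ are compact operators on the reflexive space $X^*$ whose images all lie in the fixed compact set $\overline{\wco(B_{E_1})} \subset E_2$; combined with the weak compactness of $B_{X^*}$, this confinement should deliver the relative weak compactness of $(\wcot f_n)$ in $L(X^*, E_2)$, and hence in $wE_2(X)$.
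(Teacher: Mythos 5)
Part (1) is correct and is exactly the paper's argument: the identity $x^*\circ(\wcot f)=\wco(x^*\circ f)$ plus the definition of the weak norm gives $\Vert\wcot\Vert\le\Vert\wco\Vert$.

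Part (2) has a genuine gap at precisely the point you flag as ``most delicate.'' All of the work you do before it (extracting $g$, checking $g\in wE_1(X)$, proving $x^*\circ\wcot f_n\to x^*\circ\wcot g$ in $E_2$ for each fixed $x^*$) identifies what the weak limit would have to be, but contributes nothing toward establishing that a weak limit exists; indeed, if you could prove relative weak compactness of the image of the unit ball, none of that preparation would be needed. The actual crux is your final assertion that a bounded family of operators $X^*\to E_2$ whose ranges of $B_{X^*}$ are confined to a fixed norm-compact subset of $E_2$ is relatively weakly compact in $L(X^*,E_2)$ when $X$ is reflexive. You do not prove this --- ``should deliver'' is not an argument --- and it is exactly the nontrivial content of the proposition. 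Weak compactness of subsets of operator spaces $L(Z,E)$ is delicate (the weak topology of $L(Z,E)$ is not determined by the pointwise or range behaviour of the operators in any elementary way), and this is where the paper does the real work: it factors $\wcot=\chi_2\circ S_{\psi,\varphi}\circ\chi_1^{-1}$ through the linearization $wE(X)\cong L(V_E,X)$ of Bonet--Doma\'nski--Lindstr\"om, where $S_{\psi,\varphi}\colon T\mapsto I_X\circ T\circ(\wco)^*|_{V_{E_2}}$ is a multiplication (operator-composition) map, and then invokes the theorem of Saksman and Tylli \cite[Theorem 2.9]{ST92} asserting that such a multiplication map is weakly compact on $L(V_{E_2},X)\to L(V_{E_1},X)$ when the right factor $(\wco)^*|_{V_{E_2}}$ is compact and the left factor $I_X$ is weakly compact. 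Your ``confinement'' set is essentially the image of the unit ball under such a multiplication map, so what you are claiming without proof is a special case of that theorem. To close the gap you would need either to cite this result or to supply an independent proof of the weak compactness of the confined operator set; as written, the argument is incomplete at its only genuinely hard step.
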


For the proof of Proposition \ref{prop3} we need the following facts from \cite{BDL01}:
The assumptions (b1) and (b2) imply that the point evaluation maps $\delta_z$ belong to $E^*$ for each $z\in \UnitDisk$ and
that $E$ is the dual space of $V_E$, where $V_E$ is the closed linear span in $E^*$ of $\{\delta_z\in E^*\colon z\in\UnitDisk\}$. Moreover, by \cite[Lemma 10]{BDL01}, there is an isometric isomorphism $\chi \colon L(V_E,X)\to wE(X)$, so that
\begin{equation}\label{eq:linearization}
(\chi (T))(z) = T(\delta_z), \quad  (\chi^{-1}(f))(\delta_z) = f(z),
\end{equation}
hold for $T \in L(V_E,X)$, $f\in wE(X)$ and $z\in\UnitDisk$.

\begin{proof}[Proof of Proposition \ref{prop3}]
For (1), note that
\begin{align*}
\Vert x^* \circ (\wcot f) \Vert_{E_2} = \Vert \wco (x^* \circ f)\Vert_{E_2} \le \Vert \wco \Vert \cdot \Vert x^* \circ f \Vert_{E_1},
\end{align*}
so that $\Vert \wcot \Vert \le \Vert \wco \Vert$.

We next prove (2).
Assume that $\wco\colon E_1 \to E_2$ is com\-pact. Its adjoint
$(\wco)^*\colon E_2^* \to E_1^*$ satisfies
\[
(\wco)^*(\delta_z)=\psi(z)\delta_{\varphi(z)}, \quad z\in\UnitDisk,
\]
so that  $(\wco)^*(V_{E_2})\subset V_{E_1}$. We obtain the factorization
$\wcot = \chi_2\circ S_{\psi,\varphi}\circ  \chi_1^{-1}$, where $S_{\psi,\varphi}$ is the
operator composition map
\[
T\mapsto I_{L(V_{E_1},X)}\circ T\circ (\wco)^*|_{V_{E_2}}; \quad L(V_{E_2}, X)\to L(V_{E_1}, X),
\]
and $\chi_1\colon L(V_{E_1},X) \to wE_1(X)$  and $\chi_2\colon L(V_{E_2},X) \to wE_2(X)$ are isometric isomorphisms from 
\eqref{eq:linearization}.
Indeed,
\begin{align*}
((\chi_2\circ S_{\psi,\varphi}\circ  \chi_1^{-1})(f))(z)&=
(\chi_2(\chi_1^{-1}(f)\circ  (\wco)^*|_{V_{E_2}})(z)\\
&=\chi_1^{-1}(f)((\wco)^*(\delta_z))\\
&=\chi_1^{-1}(f)(\psi(z)\delta_{\varphi(z)})\\
&=\psi(z)\chi_1^{-1}(f)(\delta_{\varphi(z)})\\
&=\psi(z)(f(\varphi(z)),
\end{align*}
for $f\in E_1$ and $z\in\UnitDisk$.

Since $(\wco)^*|_{V_{E_2}}$ is a com\-pact operator $V_{E_2} \to V_{E_1}$ by duality,
and $I_X$ is weakly com\-pact by reflexivity of $X$, it follows from \cite[Theorem 2.9]{ST92} (and \cite[Remark 2.4]{ST06}), that the operator composition  $S_{\psi,\varphi}$ is weakly com\-pact $L(V_{E_2},X) \to L(V_{E_1},X)$.
Consequently $\wcot$ is weakly com\-pact $wE_1(X) \to wE_2(X)$.
\end{proof}

As concrete examples, let us next consider weak versions of two classical function spaces: the Hardy space $H^1$ and the Bergman space $A^1$ 
(see, e.g., \cite{Z90} for the definitions of $H^1$ and $A^1$).
Both of these spaces satisfy (a2) and (b2), so the associated weak spaces $wH^1(X)$ and $wA^1(X)$ are Banach spaces for all $X$. However, they are quite different from the usual ''strong'' spaces $H^1(X)$ and $A^1(X)$; see \cite{L05, LT06, LTW09}.

Compactness and weak compactness of $\wco$ are well understood on $H^1$ and $A^1$: Function-theoretic characterizations exist for the compactness of $\wco$ on both $H^1$ and $A^1$ \cite{CH01, CZ04}. In \cite{CH01} it was further shown that  every weakly compact $\wco$ on $H^1$ is compact.
Moreover, because $A^1$ is isomorphic to $\ell^1$, it has the Schur property, and so every weakly compact linear operator on $A^1$ is compact; see \cite[p.~302]{LST98}. Using Proposition \ref{prop3}, we get the following extension of these scalar results.

\begin{corollary}\label{cor:examples} Let $\psi\in H(\UnitDisk, \ComplexPlane)$ be nonzero and let $\varphi$ be an analytic self-map of $\UnitDisk$. Then
\begin{enumerate}
\item $\wcot$ is weakly compact on $wH^1(X)$ if and only if $X$ is reflexive and
$\wco$ is compact on $H^1$.
\item $\wcot$ is weakly compact on $wA^1(X)$ if and only if $X$ is reflexive and
$\wco$ is compact on $A^1$.
\end{enumerate}
\end{corollary}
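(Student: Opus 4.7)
The strategy is to read off the corollary as a direct combination of Proposition \ref{prop3} with the known scalar fact that on both $H^1$ and $A^1$ every weakly compact weighted composition operator is automatically compact. The point is that $H^1$ and $A^1$ both satisfy (b1) (they contain the constants) and (b2) (their closed unit balls are $\tau_{co}$-compact, by the standard normal-families argument), so the machinery of Proposition \ref{prop3} and Proposition \ref{prop1} applies verbatim to $E=H^1$ and $E=A^1$.

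For the necessity direction of both (1) and (2), I would argue as follows. Suppose $\wcot$ is weakly compact on $wE(X)$, where $E \in \{H^1, A^1\}$. The class of weakly compact operators is a closed operator ideal, so Proposition \ref{prop1}(2) applies and yields that $I_X$ is weakly compact, i.e., $X$ is reflexive, and simultaneously that $\wco\colon E\to E$ is weakly compact. To upgrade weak compactness to compactness of $\wco$ I would then cite the scalar results: on $H^1$ every weakly compact weighted composition operator is compact by \cite{CH01}, and on $A^1$ the same conclusion holds because $A^1\simeq\ell^1$ has the Schur property, so weakly compact operators on $A^1$ are automatically compact (see \cite[p.~302]{LST98}).

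For the sufficiency direction, assume $X$ is reflexive and $\wco$ is compact on $E$. Since $E$ satisfies (b1) and (b2), Proposition \ref{prop3}(2) applies directly and gives that $\wcot$ is weakly compact $wE(X)\to wE(X)$. This handles both (1) and (2) at once.

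The only place where any care is required is the verification that the hypotheses of Proposition \ref{prop3} are actually in force for $H^1$ and $A^1$; this is the line in the preceding paragraph of the paper asserting that both spaces satisfy (b1) and (b2), so no real obstacle arises. The proof is essentially a bookkeeping exercise: assemble Proposition \ref{prop1}(2), Proposition \ref{prop3}(2), and the two scalar upgrades of weak compactness to compactness on $H^1$ and $A^1$, and the corollary falls out.
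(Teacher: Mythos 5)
Your proposal is correct and follows essentially the same route as the paper, which derives the corollary by combining Proposition \ref{prop1}(2) (for necessity, using that the weak spaces satisfy (a1)--(a4) via (b1)--(b2)), Proposition \ref{prop3}(2) (for sufficiency), and the cited scalar facts that weakly compact weighted composition operators on $H^1$ and on $A^1$ are automatically compact. No gaps.
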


Corollary \ref{cor:examples} could include more examples. For example, (weak) compactness of $\wco$ between weighted spaces $H^\infty_v$ is well known \cite{CD99, CH00}. Also the weak space $w H^\infty_v(X)$ is well defined (and coincides with the usual ''strong'' space $H^\infty_v(X)$). However, weak compactness of the more general ''operator-weighted'' compositions $\wcot$ (where $\psi\in H(\UnitDisk, L(X,Y))$) between $H^\infty_v(X)$ spaces has been characterized in \cite{LT09}.

It is worthwhile to note that the general framework (Propositions \ref{prop1}, \ref{prop2} and \ref{prop3}) does not in general hold for operator-weighted composition operators. For example, an operator-weighted composition between weighted $H^\infty(X)$ spaces can be compact even if $X$  is infinite dimensional \cite{LT09}. Indeed, the present knowledge of operator-valued composition opeators is quite rudimentary and, for example, their boundedness and compactness on Hardy spaces $H^p(X)$ is still an open question; see the discussion in \cite[Section 7]{LT14}.

\begin{remark}\label{rmk1}
An analogue of Proposition \ref{prop3} (2) holds for the ideal of weakly conditionally compact operators.
Recall that an operator $T\in L(X,Y)$ is weakly conditionally compact if for any bounded sequence $(x_n)$ in $X$ there is a weakly Cauchy subsequence $(T x_{n_k})$ in $Y$.
Then, if $X$ does not contain an isomorphic copy of $\ell^1$ and $\wco\colon E_1\to E_2$ is com\-pact,
it follows that $\wcot$ is weakly conditionally com\-pact $wE_1(X) \to wE_2(X)$. The proof is analogous to Proposition \ref{prop3} (2), but uses \cite{LS99} instead to deduce that the operator composition $S_{\psi,\varphi}$ is weakly conditionally compact.
\end{remark}


\section{Weighted composition operators between Bloch-type spaces}

In this section we apply the results from the previous section to weighted composition operators between Bloch-type spaces of analytic functions.
For $\alpha\in (0,\infty)$ and a complex Banach space $X$, the vector-valued Bloch-type space $\Blocha(X)$ is the Banach space of functions $f\in H(\UnitDisk, X)$ such that
\begin{align}\label{eq:defblohtypespace}
\Vert f \Vert_{\Blocha(X)}=\Vert f(0) \Vert_X + \sup_{z \in\UnitDisk}\Vert f'(z) \Vert_{X}(1-|z|^2)^\alpha < \infty.
\end{align}
In the special case $X=\ComplexPlane$, we get the usual scalar-valued Bloch-type spaces $\Blocha=\Blocha(X)$. In the case $\alpha=1$, we get the Bloch space
$\Bloch(X)=\Bloch_1(X)$ (see, for example, \cite{AB03}).

Boundedness and compactness of weighted composition operators between scalar-valued Bloch-type spaces $\Blocha$ is well known. Ohno, Stroethoff and Zhao \cite{OSZ03} provided the following function-theoretic characterizations, which
involve the quantities
\begin{align*}
q_1(\alpha,\beta,z)=\frac{(1-|z|^2)^{\beta}}{(1-|\varphi(z)|^2)^{\alpha}} |\psi(z)||\varphi'(z)|;\\
q_2(\beta,z)=(1-|z|^2)^{\beta} \log \frac{1}{1-|\varphi(z)|^2} |\psi'(z)|;\\
q_3(\alpha,\beta,z)=\frac{(1-|z|^2)^{\beta}}{(1-|\varphi(z)|^2)^{\alpha -1}} |\psi'(z)|,
\end{align*}
for $\alpha,\beta\in (0,\infty)$:
\begin{enumerate}
\item If $0< \alpha < 1$, then $\wco\colon \Blocha \to \Blochb$ is bounded (respectively compact)  if and only if
\begin{align*}\sup_{z\in \UnitDisk} q_1(\alpha,\beta,z)<\infty\end{align*}
(respectively
\begin{align*}
\lim_{|\varphi(z)|\to 1} q_1(\alpha,\beta,z)=0).
\end{align*}
\item If $\alpha = 1$, then $\wco\colon \Bloch \to \Blochb$ is bounded (respectively compact) if and only if
\begin{align*}
\sup_{z\in \UnitDisk} q_1(1,\beta,z)<\infty\quad \textrm{and}\quad \sup_{z\in\UnitDisk}q_2(\beta,z)<\infty
\end{align*}
(respectively
\begin{align*}
\lim_{|\varphi(z)|\to 1} q_1(1,\beta,z)=0\quad \textrm{and}\quad \lim_{|\varphi(z)|\to 1}q_2(\beta,z)=0).
\end{align*}
\item If $1< \alpha < \infty$, then $\wco\colon \Blocha \to \Blochb$ is bounded  (respectively compact) if and only if
\begin{align*}
\sup_{z\in \UnitDisk} q_1(\alpha,\beta,z)<\infty\quad \textrm{and}\quad \sup_{z\in\UnitDisk}q_3(\alpha,\beta,z)<\infty
\end{align*}
(respectively
\begin{align*}
\lim_{|\varphi(z)|\to 1}q_1(\alpha,\beta,z)=0\quad \textrm{and}\quad\lim_{|\varphi(z)|\to 1}q_3(\alpha,\beta,z)=0).
\end{align*}
\end{enumerate}

The following theorem extends the scalar results to vector-valued Bloch-type spaces.
For composition operators, the case $\alpha=\beta=1$ is from \cite[Thm.~4]{LST98} and the case $\alpha=\beta \in (0,\infty)$ follows from \cite[Prop.~11]{BDL01}.
Our proof applies a similar approach to that of \cite{LST98}.

\begin{theorem}\label{thm:blochtype}
Let $\psi\in H(\UnitDisk, \ComplexPlane)$ be nonzero and let $\varphi$ be an analytic self-map of $\UnitDisk$.
Let $\alpha,\beta\in (0,\infty)$. Let $X$ be a complex Banach space.
\begin{enumerate}
\item $\wco$ is weakly compact $\Blocha\to \Blochb$ if and only if it is compact.
\item $\wcot$ is bounded $\Blocha(X)\to \Blochb(X)$ if and only if $\wco$ is
bounded $\Blocha\to \Blochb$.
\item $\wcot$ is weakly compact $\Blocha(X)\to \Blochb(X)$ if and only if $\wco$ is (weakly) compact $\Blocha\to \Blochb$ and $X$ is reflexive.
\end{enumerate}
\end{theorem}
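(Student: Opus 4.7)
The plan is to first observe that the vector-valued Bloch-type space $\Blocha(X)$ coincides with the weak space $w\Blocha(X)$ up to equivalent norms: the inequality
\begin{align*}
\tfrac{1}{2}\|f\|_{\Blocha(X)}\le\|f\|_{w\Blocha(X)}\le\|f\|_{\Blocha(X)}
\end{align*}
follows directly by dualizing the $X$-norms of $f(0)$ and of $(1-|z|^2)^\alpha f'(z)$ separately against unit-ball functionals $x^*\in X^*$. Granted this, part (2) is immediate from Propositions \ref{prop1}(1) and \ref{prop3}(1), and part (3) follows from Propositions \ref{prop1}(2) and \ref{prop3}(2) combined with part (1), which identifies weak compactness with compactness on the scalar side. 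The entire substance therefore lies in (1), where only the implication ``weakly compact $\Rightarrow$ compact'' requires work.

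For this direction I would proceed by contraposition, using the Ohno--Stroethoff--Zhao characterizations \cite{OSZ03}. Assuming $\wco\colon\Blocha\to\Blochb$ is bounded but not compact, I pick $\delta>0$ and a sequence $(z_n)\subset\UnitDisk$ with $|\varphi(z_n)|\to 1$ along which one of $q_1(\alpha,\beta,z_n)$, $q_2(\beta,z_n)$ (when $\alpha=1$) or $q_3(\alpha,\beta,z_n)$ (when $\alpha>1$) stays $\ge\delta$. Setting $a_n=\varphi(z_n)$, I would construct a uniformly bounded sequence $(f_n)\subset\Blocha$ of bump-type test functions concentrated at $a_n$---roughly of the form $(1-|a_n|^2)/(1-\overline{a_n}z)^\alpha$ in the polynomial regimes, with an additional logarithmic factor in the $q_2$ case---so chosen that $\|\wco f_n\|_{\Blochb}$, estimated via evaluation at $z_n$, is bounded below by a positive constant multiple of $\delta$. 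After thinning $(a_n)$ to a sufficiently lacunary subsequence $(a_{n_k})$, I would show that both $(f_{n_k})$ and $(\wco f_{n_k})$ are equivalent to the unit vector basis of $c_0$ in their respective Bloch-type norms. Since any operator that fixes a copy of $c_0$ fails to be weakly compact, this will contradict the standing assumption on $\wco$.

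The hard part is establishing the $c_0$-equivalence. One must arrange $(a_{n_k})$ thin enough that finite linear combinations $\sum_k c_k f_{n_k}$ have Bloch-type norm comparable to $\sup_k|c_k|$, and likewise for the images under $\wco$. The upper bound reduces to controlling the weighted derivatives $(1-|z|^2)^\alpha|f_{n_j}'(z)|$ coming from bumps with $j\ne k$ in a hyperbolic neighbourhood of $a_{n_k}$, which requires sharp pointwise estimates together with a lacunarity condition on $(|a_{n_k}|)$; the lower bound comes from evaluation at the critical points $z_{n_k}$ combined with the geometric separation of the $a_{n_k}$. The case $\alpha=1$ with $q_2$ failing is the most delicate, since the test functions must simultaneously capture logarithmic growth near $a_n$ and retain $c_0$-independence across $n$.
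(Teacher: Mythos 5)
Your reduction of (2) and (3) to (1), via the identification $\Blocha(X)=w\Blocha(X)$ with equivalent norms and Propositions \ref{prop1} and \ref{prop3}, is exactly the paper's route, and your two-sided norm estimate is correct. For part (1) you diverge at the key step. The paper also argues by contradiction using the Ohno--Stroethoff--Zhao test functions and the same lower bound $\Vert \wco f_n\Vert_{\Blochb}\ge \varepsilon/2$ obtained by evaluating at $z_n$, but it then avoids any $c_0$-analysis: it notes $f_n\in\Blochaz$, shows $f_n\to 0$ weakly in $\Blochaz$ by pairing against polynomials under the duality $(\Blochaz)^*\cong A^1$ of \eqref{eq:duality}, and invokes the Dunford--Pettis property of $\Blochaz$ (inherited from the Schur property of $A^1\sim\ell^1$), so that a weakly compact $\wco$ would send the weakly null $(f_n)$ to a norm-null sequence, contradicting the lower bound. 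Your route --- thinning so that $(f_{n_k})$ and $(\wco f_{n_k})$ are both equivalent to the unit vector basis of $c_0$, whence $\wco$ fixes a copy of $c_0$ and cannot be weakly compact --- is a legitimate alternative with precedent in the literature, and it would yield the formally stronger conclusion that a non-compact $\wco$ fixes $c_0$. It buys more, but at the cost of exactly the estimates you defer.

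Two observations would shrink the gap: the upper $c_0$-estimate for the image sequence is free, since $\Vert\sum_k c_k\,\wco f_{n_k}\Vert_{\Blochb}\le\Vert\wco\Vert\,\Vert\sum_k c_k f_{n_k}\Vert_{\Blocha}$; and the domain-side equivalence can be had without lacunarity computations from the weak nullity of $(f_n)$ (which you must establish anyway, e.g.\ by the paper's duality argument), the isomorphism $\Blochaz\sim c_0$, and the Bessaga--Pe\l czy\'nski selection principle. What genuinely remains open in your plan is the lower estimate for $(\wco f_{n_k})$: when you evaluate $\bigl(\sum_j c_j\,\wco f_{n_j}\bigr)'$ at $z_{n_k}$ you must show the off-diagonal contributions from $(\wco f_{n_j})'(z_{n_k})$, $j\ne k$, are dominated by the diagonal term of size $\ge\varepsilon/2$, using pointwise decay of $f_{n_j}$ and $f_{n_j}'$ away from $a_{n_j}$ together with the boundedness conditions $\sup_z q_i<\infty$ (note that for $\alpha<1$ the term involving $\psi'$ must be handled via $\psi=\wco 1\in\Blochb$ and the boundedness of $\Blocha$-functions). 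Until those estimates are written out, your proof of (1) is a plan rather than a proof; the paper's DPP argument sidesteps them entirely.
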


For the proof of Theorem \ref{thm:blochtype}, we will need the following facts.
For $\alpha \in (0,\infty)$, let $\Blochaz$ denote the little Bloch-type space, which is the closed subspace of $\Blocha$ for which
\begin{align*}
\lim_{|z|\to 1}| f'(z) |(1-|z|^2)^\alpha=0.
\end{align*}
First, by \cite[Thm.~15]{Z93}, the Bergman space $A^1$ is the dual space of $\Blochaz$ for all $\alpha$ under the pairing
\begin{align}\label{eq:duality}
\langle f, g \rangle_\alpha = \lim_{r \rightarrow 1} \int_{\mathbb{D}} f(rz) \overline{g(rz)} (1-|z|^2)^{\alpha-1} \,\mathrm  dA(z), \quad f \in
\Blochaz , g \in A^1,
\end{align}
where $\mathrm dA(z)$ is the normalized area measure on $\UnitDisk$. Second, each $\Blochaz$ has the Dunford-Pettis property (DPP). Recall that a Banach space $E$ has the DPP whenever every weakly compact operator from $E$ to some Banach space is completely continuous, i.e., maps weakly null sequences to norm null sequences. The fact that $\Blochaz$ has the DPP follows from the above duality and the fact that $A^1\sim \ell^1$ has the Schur property; see, for example, \cite{Diestel, LST98}.



\begin{proof}[Proof of Theorem \ref{thm:blochtype}]
It is easy to see that each $\Blocha(X)$ is a weak space of the form \eqref{eq:defweakspace}. In fact, $\Blocha(X)=w\Blocha(X)$, with equivalent norms, where $w\Blocha(X)$ is modelled on $\Blocha$. Therefore parts (2) and (3) of Theorem \ref{thm:blochtype} follow from part (1) and Propositions \ref{prop1} and \ref{prop3}. We thus only need to prove part (1).

Let first $\alpha,\beta\in (0,\infty)$ and
suppose that $\wco$ is weakly compact, but not compact, $\Blocha \to \Blochb$. Then there are $\varepsilon > 0$ and a sequence $(z_n)\subset \UnitDisk$ so that $|\varphi(z_n)|\to 1$ and $q_1(\alpha, \beta, z_n)\ge \varepsilon$ for all $n$. For $n\ge 0$, define the functions $f_n\in H(\UnitDisk, \ComplexPlane)$ by
\begin{align*}
f_n(z)=\frac{(1-|\varphi(z_n)|^2)^2}{(1-\overline{\varphi(z_n)}z)^{\alpha+1}}-\frac{1-|\varphi(z_n)|^2}{(1-\overline{\varphi(z_n)}z)^{\alpha}},
\end{align*}
for $z\in\UnitDisk$. In \cite[p.~202]{OSZ03} it has been shown that
the sequence $(f_n)$ is uniformly bounded in $\Blocha$,
$f_n(z)\to 0$ uniformly on compact subsets of $\UnitDisk$,
$f_n(\varphi(z_n))=0$, and $f_n'(\varphi(z_n))=\overline{\varphi(z_n)}/(1-|\varphi(z_n)|^2)^\alpha$. Thus
\begin{align}\label{eq:blochtype1}
\begin{split}
\Vert \wco f_n \Vert_{\Blochb}
&\ge
| (\wco f_n)'(z_n) |(1-|z_n|)^\beta \\
&=|\varphi(z_n)| q_1(\alpha,\beta,z_n) \\
&\ge \varepsilon/2,
\end{split}
\end{align}
for all $n$ large enough.
We further have $f_n\in \Blochaz$ for all $n$. Let $\widehat f_n(j)$ denote the $j$th Taylor coefficient of $f_n$ and let $p_N(z)=\sum_{j=0}^N a_j z^j$ be a polynomial.
Since $f_n(z)\to 0$ uniformly on compact subsets of $\UnitDisk$, we have $\widehat f_n(j) = f_n^{(j)}(0)/j!\to 0$ as $n\to\infty$.
By \eqref{eq:duality},
\begin{align*}
\langle f_n, p_N \rangle_\alpha &= \lim_{r \rightarrow 1} \int_0^{2\pi}\int_0^1\sum_{k=0}^\infty\sum_{j=0}^N \widehat f_n(k)\overline{a_j}r^{k+j}s^{k+j+1}\e^{i(k-j)\theta} (1-s^2)^{\alpha-1} \,\mathrm  ds\frac{\mathrm  d\theta}{2\pi}\\
&=\sum_{j=0}^N \widehat f_n(j)\overline{a_j} \int_0^1 s^{2j+1}(1-s^2)^{\alpha-1} \,\mathrm ds \to 0,
\end{align*}
as $n\to\infty$. Since the polynomials are dense in $A^1$, this means that $f_n\to 0$ weakly in $\Blochaz$. 
Weak compactness of $\wco$ and the DPP of $\Blochaz$ now imply that $\wco f_n\to 0$ in $\Blochbz$ and thus in $\Blochb$. This contradicts \eqref{eq:blochtype1}.

We next divide the proof into 3 cases: (i) $\alpha\in (0,1)$, (ii) $\alpha=1$, and (iii) $\alpha\in (1,\infty)$.
The above argument proves the case (i). Cases (ii) and (iii) are completed similarly, by using functions
\begin{align*}
g_n(z)=\frac{-1}{\log(1-|\varphi(z_n)|^2)}\left(3\left(\log\frac{1}{1-\overline{\varphi(z_n)}z}\right)^2-2\left(\log\frac{1}{1-\overline{\varphi(z_n)}z}\right)^3\right),
\end{align*}
respectively,
\begin{align*}
h_n(z)=(\alpha+1)\frac{1-|\varphi(z_n)|^2}{(1-\overline{\varphi(z_n)}z)^\alpha}-\alpha\frac{(1-|\varphi(z_n)|^2)^2}{(1-\overline{\varphi(z_n)}z)^{\alpha+1}},
\end{align*}
instead of $f_n$.
\end{proof}

\section*{Acknowledgement}
We thank Hans-Olav Tylli for his comments on a preliminary version of the manuscript.



\begin{thebibliography}{MMM}

\bibitem{AB03}
J. Arregui and O. Blasco, \emph{Bergman and Bloch spaces of vector-valued functions},  Math. Nachr. \textbf{261/262} (2003), 3--22.

\bibitem{BDL01}
J. Bonet, P. Doma\'nski and M. Lindstr\"om,
\emph{Weakly com\-pact composition operators on analytic vector-valued function spaces},
Ann. Acad. Sci. Fenn. Math. \textbf{26} (2001), 233--248.

\bibitem{BGJW12}
J. Bonet, M.C. G\'omez-Collado, D. Jornet and E. Wolf,
\emph{Operator-weighted composition operators between weighted spaces of vector-valued analytic functions},
Ann. Acad. Sci. Fenn. Math. \textbf{37} (2012), 319--338.

\bibitem{CJ90}
M. Cambern and K. Jarosz, \emph{Multipliers and isometries in $H^\infty_E$}, Bull. London Math. Soc. \textbf{22} (1990), 463--466.

\bibitem{CD99}
M.D. Contreras and S. D\'iaz-Madrigal, \emph{Compact-type operators defined on $H^\infty$}, Contemp. Math. \textbf{232} (1999), 111--118.

\bibitem{CH00}
M.D. Contreras and A.G. Hern\'andez-D\'iaz, \emph{Weighted composition operators in
weighted Banach spaces of analytic functions}, J. Austral. Math. Soc. \textbf{69} (2000), 41--60.

\bibitem{CH01}
M.D. Contreras and A.G. Hern\'andez-D\'iaz,
\emph{Weighted composition operators on Hardy spaces}, J. Math. Anal. Appl. \textbf{263} (2001), 224--233.

\bibitem{CM95}
C.C. Cowen and B.D. MacCluer,
\emph{Composition Operators on Spaces of Analytic Functions},
CRC Press, Boca Raton 1995.

\bibitem{CZ04}
\u{Z}.~\u{C}u\u{c}kovi\'c and R.~Zhao,
\emph{Weighted composition operators on the Bergman space},
J. London Math. Soc. \textbf{70} (2004), 499--511.

\bibitem{Diestel}
J. Diestel,
\emph{A survey of results related to the Dunford-Pettis property}, Contemp. Math. \textbf{2} (1980), 15--60.

\bibitem{HVW15}
M. Hassanlou, H. Vaezi and M. Wang,
\emph{Weighted composition operators on weak vector-valued weighted Bergman spaces and Hardy spaces},
Banach J. Math. Anal. \textbf{9} (2015), 35--43.

\bibitem{L05}
J. Laitila, \emph{Weakly com\-pact composition operators on vector-valued
BMOA}, J. Math. Anal. Appl. \textbf{308} (2005), 730--745.

\bibitem{LT06}
J. Laitila and H.-O. Tylli,
\emph{Composition operators on vector-valued harmonic functions and Cauchy transforms}, Indiana Univ. Math. J. \textbf{55} (2006), 719--746.

\bibitem{LT09}
J. Laitila and H.-O. Tylli,
\emph{Operator-weighted composition operators on vector-valued analytic function spaces}, Illinois J. Math. \textbf{53} (2009), 1019--1032.

\bibitem{LTW09}
J. Laitila, H.-O. Tylli and M. Wang,
\emph{Composition operators from weak to strong spaces of vector-valued analytic functions},
J. Operator Theory \textbf{62} (2009), 281--295.

\bibitem{LT14}
J. Laitila and H.-O. Tylli,
\emph{Composition operators on vector-valued analytic function spaces: a survey},
Acta et Comm. Univ. Tartu. Math. \textbf{18} (2014), 135--157.

\bibitem{LS99}
M. Lindstr\"om and G. Schl\"uchtermann, \emph{Composition of operator ideals}, Math.
Scand. \textbf{84} (1999), 284--296.

\bibitem{Li90}
P.-K. Lin,
\emph{The isometries of $H^\infty(E)$}, Pacific J. Math. \textbf{143} (1990), 69--77.

\bibitem{LST98}
P. Liu, E. Saksman and H.-O. Tylli,
\emph{Small composition operators on
analytic vector-valued function spaces},
Pacific J. Math. \textbf{184} (1998), 295--309.

\bibitem{M03}
J.S. Manhas,
\emph{Multiplication operators on weighted locally convex spaces of
vector-valued analytic functions}, Southeast Asian Bull. Math. \textbf{27} (2003), 649--660.

\bibitem{M08}
J.S. Manhas,
\emph{Weighted composition operators on weighted spaces of
vector-valued analytic functions}, J. Korean Math. Soc. \textbf{45} (2008), 1203--1220.

\bibitem{M11}
J.S. Manhas,
\emph{Weighted composition operators between weighted spaces of
vector-valued holomorphic functions on Banach spaces},
Appl. Math. Comput. \textbf{218} (2011), 929--934.

\bibitem{OSZ03}
S. Ohno, K. Stroethoff, and R. Zhao, \emph{Weighted composition operators between Bloch-type
spaces}, Rocky Mountain J. Math. \textbf{33} (2003),191--215.

\bibitem{P80}
A. Pietsch, \emph{Operator Ideals}, North-Holland, 1980.

\bibitem{ST92}
E. Saksman and H.-O. Tylli,
\emph{Weak compactness of multiplication operators on spaces of bounded linear operators},
Math. Scand. \textbf{70} (1992), 91--111.

\bibitem{ST06}
E. Saksman and H.-O. Tylli,
\emph{Multiplications and elementary operators in the Banach
space setting},
in: Methods in Banach space theory,
London Math. Soc. Lecture Note Ser. vol. 337 (2006), Cambridge University Press,
Cambridge, pp. 253-292.

\bibitem{S93}
J.H. Shapiro, \emph{Composition Operators and Classical Function Theory},
Springer-Verlag, New York, 1993.

\bibitem{W11}
M. Wang,
\emph{Weighted composition operators between Dirichlet spaces},
Acta Math. Sci. Ser. B Engl. Ed. \textbf{31} (2011), 641--651.

\bibitem{Z90}
K. Zhu, \emph{Operator Theory in Function Spaces},
Marcel Dekker, New York, 1990.

\bibitem{Z93}
K. Zhu, \emph{Bloch type spaces of analytic functions},
Rocky Mountain J. Math. \textbf{23} (1993), 1143--1177.

\end{thebibliography}
\end{document}